\title{Computing the Minimal Model for the Quantum Symmetric Algebra}
\author{Daniel Barter}
\date{\today}
\def\@endtheorem{\endtrivlist}
\theoremstyle{plain}
\newtheorem{Theorem}{Theorem}
\newtheorem{Proposition}[Theorem]{Proposition}
\theoremstyle{definition}
\newtheorem{Definition}[Theorem]{Definition}
\newtheorem{Lemma}[Theorem]{Lemma}
\newtheorem{Example}[Theorem]{Example}
\DeclareMathOperator{\End}{End}
\DeclareMathOperator{\Rep}{Rep}
\DeclareMathOperator{\Sym}{Sym}
\DeclareMathOperator{\QSym}{QSym}
\DeclareMathOperator{\GL}{GL}
\DeclareMathOperator{\coker}{coker}
\newcommand{\Vect}{{\bf Vec}}
\newcommand{\XX}{{\bf X}}
\newcommand{\DD}{{\bf D}}
\begin{document}

\begin{abstract}
In this note, we use some of the tensor categorial machinery developed by the quantum algebra community to study algebraic objects which appear in representation stability. In \cite{MR3430359}, Sam and Snowden prove that the twisted commutative algebra $\Sym$ is Morita equivalent to the horizontal strip category. Their proof relies on a lemma proved by Olver in \cite{MR924166}. We give a self contained proof that replaces Olver's lemma with information about the associator in the underlying category of polynomial $\GL(\infty)$-representations. In fact, we prove a quantum analogue of the theorem. The classical version follows by letting the parameter converge to $1$. 
\end{abstract}

\maketitle

\section{Introduction} \label{sec:introduction}

Let $\mathcal{S}$ be the category of polynomial $\GL(\infty)$-representations studied by Sam and Snowden in \cite{MR3430359}. This category contains the algebra $\Sym = \mathbb{C}[x_1,x_2,\dots]$ which is Morita equivalent to ${\bf FI}$, the category of finite sets with injections. A proof can be found in \cite{MR3556290}. In Section 3 of \cite{MR3430359}, Sam and Snowden prove that $\Sym$ is Morita equivalent to ${\bf HS}$, the category whose objects are partitions and whose morphisms are defined by
\[
{\bf HS}(\lambda,\mu) = \begin{cases}
\mathbb{C} \{ \mu \backslash \lambda \} & \lambda \subseteq \mu \; , \; \mu \backslash \lambda \in {\rm
HS}
\\
0 & \text{otherwise}
\end{cases}
\]
Composition is defined as follows: Assume that \( \mu \backslash \lambda \) and \( \nu \backslash \mu \) are horizontal strips. If \( \nu
\backslash \lambda \) is a horizontal strip, then
\[
(\nu \backslash \mu)(\mu \backslash \lambda) =  \nu \backslash \lambda.
\]
If \( \nu \backslash \lambda \) is not a horizontal strip, then the composition is zero. Now let $\mathcal{H}$ be the category of polynomial type 1 representations of $U_a(\mathfrak{gl}_{\infty})$ defined in Definition \ref{ex:main_example}. Inside $\mathcal{H}$, we have the quantum symmetric algebra $\QSym$. In this chapter, we prove the following:
\begin{Theorem} \label{thm:minimal_model}
The quantum symmetric algebra $\QSym$ is Morita equivalent to ${\bf HS}$ for generic $a$.
\end{Theorem}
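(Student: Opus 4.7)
The plan is to exhibit a set of compact projective generators of the category of $\QSym$-modules in $\mathcal{H}$ whose full subcategory of morphisms matches $\mathbf{HS}$, and to conclude by standard Morita recognition. At generic $a$, the category $\mathcal{H}$ is semisimple with simple objects $V_\lambda$ indexed by partitions and with classical fusion rules, and the quantum symmetric algebra decomposes as $\QSym = \bigoplus_n V_{(n)}$. For each partition $\lambda$, set $P_\lambda := \QSym \otimes V_\lambda$. The free--forgetful adjunction $\mathrm{Hom}_{\QSym}(P_\lambda,-)\cong \mathrm{Hom}_{\mathcal{H}}(V_\lambda,-)$ is exact, so every $P_\lambda$ is projective; they jointly generate because $\mathcal{H}$ is generated by the $V_\lambda$.

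The Hom computation reduces via the same adjunction to a branching question. Using $\QSym = \bigoplus_n V_{(n)}$ and the quantum Pieri rule, one finds
\[
\mathrm{Hom}_{\QSym}(P_\lambda,P_\mu) \;=\; \bigoplus_n \mathrm{Hom}_{\mathcal{H}}(V_\lambda, V_{(n)}\otimes V_\mu),
\]
which is one-dimensional with canonical basis $\lambda\backslash\mu$ when $\mu\subseteq\lambda$ and $\lambda\backslash\mu$ is a horizontal strip, and zero otherwise. This reproduces the Hom spaces of $\mathbf{HS}$ (up to the direction convention $P_\lambda \leftrightarrow \lambda$).

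What remains is to match composition. If $\nu\backslash\lambda$ is not a horizontal strip, then $\mathrm{Hom}_{\QSym}(P_\lambda,P_\nu)=0$ by the above, so any composition $P_\lambda\to P_\mu\to P_\nu$ vanishes automatically---exactly the $\mathbf{HS}$ vanishing rule. When $\nu\backslash\lambda$ is a horizontal strip, the target Hom is one-dimensional and the only content is that the composite is a \emph{nonzero} multiple of the canonical basis vector. Unwinding the adjunctions, for $f\colon V_\lambda\to \QSym\otimes V_\mu$ and $g\colon V_\mu\to \QSym\otimes V_\nu$ the composite is realized by
\[
V_\lambda \xrightarrow{f} \QSym\otimes V_\mu \xrightarrow{1\otimes g} \QSym\otimes\QSym\otimes V_\nu \xrightarrow{m\otimes 1} \QSym\otimes V_\nu,
\]
where $m$ denotes the quantum symmetric multiplication.

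The main obstacle---the step that replaces Olver's lemma---is showing this composite does not vanish. My plan is to induct on $|\nu\backslash\lambda|$, reducing to the case in which $\nu\backslash\mu$ is a single box, and in that base case to compute the relevant associator scalar of $\mathcal{H}$ directly. Concretely, the nonvanishing boils down to the statement that the eigenvalue by which the associator $(V_{(p)}\otimes V_{(q)})\otimes V_\mu \to V_{(p)}\otimes(V_{(q)}\otimes V_\mu)$ acts on the $V_\lambda$-isotypic summand, composed with the Clebsch--Gordan projection $V_{(p)}\otimes V_{(q)}\to V_{(p+q)}$ that defines $m$, is a nonzero rational function of $a$. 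Generic $a$ avoids its finitely many zeros, so the composite is nonzero there, and letting $a\to 1$ recovers the classical Sam--Snowden statement as a byproduct.
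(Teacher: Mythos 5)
Your setup is sound and runs parallel to the paper's: the projectives $P_\lambda = \QSym\otimes V_\lambda$, the adjunction computing $\mathrm{Hom}_{\QSym}(P_\lambda,P_\mu)$ as $\bigoplus_n\mathcal{H}(V_\lambda,V_{(n)}\otimes V_\mu)$, the Pieri rule giving one-dimensionality exactly for horizontal strips, and the automatic vanishing of compositions when $\nu\backslash\lambda$ fails to be a horizontal strip. You have also correctly isolated the crux: showing that the composite of two horizontal-strip generators is a \emph{nonzero} multiple of the canonical generator, which is exactly the content that replaces Olver's lemma. The gap is that this crux is only announced, not proved. Your "base case" after the reduction to a single box is not a single finite computation: it is the assertion that, for every $p$ and every configuration $\nu\subseteq\mu\subseteq\lambda$ with $\mu\backslash\nu$ a box and $\lambda\backslash\nu$ a horizontal strip, the copy of $V_\lambda$ in $V_{(p)}\otimes V_{(1)}\otimes V_\nu$ determined by the intermediate object $\mu$ has nonzero component on the summand factoring through $V_{(p)}\otimes V_{(1)}\to V_{(p+1)}$. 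That is an infinite family of $6j$-symbol evaluations, and "compute the relevant associator scalar directly" is precisely the hard part; nothing in your sketch produces the scalar or shows it is nonzero. (A smaller issue: since there are infinitely many such conditions, "generic $a$ avoids finitely many zeros" does not quite close the argument; one should work over $\mathbb{C}(a)$, as the paper does, or identify the zeros as roots of unity.)

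For comparison, the paper gets the nonvanishing from a single piece of associator data that is already in the literature: the Ram--Leduc semi-normal form for the Iwahori--Hecke algebra, i.e.\ the explicit $2\times 2$ matrices by which $g_i$ acts on the two-dimensional multiplicity spaces $\mathcal{H}(\mu,\lambda\otimes V_{(1)}\otimes V_{(1)})$. Because the off-diagonal entries of these blocks are nonzero, the skew representation $\mathcal{H}(\mu,\lambda\otimes V_{(1)}^{\otimes n})$ of $H_n$ is cyclic, generated by any standard tableau vector; since $H_n$ is semisimple over $\mathbb{C}(a)$ and the invariant subspace is one-dimensional (Pieri again), every standard tableau therefore projects onto a nonzero invariant. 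Combined with the observation that $-\otimes_T S$ is full and acts on morphism spaces as projection onto Hecke-algebra invariants, this gives the nonvanishing of all composites at once, with no induction and no further $6j$-symbol computation. If you want to complete your argument along your own lines, you will either need to actually evaluate those associator scalars (e.g.\ by iterating the Ram--Leduc data), or replace the pointwise computation with a structural argument such as the cyclicity one above.
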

\noindent Theorem \ref{thm:minimal_model} implies that many of the results in \cite{MR3430359} which hold for $\Sym$ are also true for $\QSym$. 

The author would like to thank Corey Jones, Scott Morrison, Andrew Snowden and Phil Tosteson for many useful conversations and their support.

\section{Preliminaries} \label{sec:Preliminaries}

\begin{Definition} \label{def:tree_string_diagrams}
 Let $\XX$ be a semi-simple tensor category. Index the simple objects with a set $\Lambda$. Choose a basis for each $\XX(\mu,\lambda \otimes \nu)$ denoted by
\[
\begin{tikzpicture}[yscale=-1,scale=0.015,baseline={([yshift=-.5ex]current bounding box.center)}]
\begin{scope}[shift={(0.00mm,719.29mm)}]
\draw [fill=none,draw=black] (887.14mm,53.08mm)
-- (281.43mm,-701.21mm)
;
\draw [fill=none,draw=black] (886.64mm,53.08mm)
-- (1492.36mm,-701.21mm)
;
\draw [fill=none,draw=black] (886.79mm,52.41mm)
-- ++(0.00mm,953.58mm)
;
\node [black] at (302.86mm,-850mm) { $\lambda$ };
\node [black] at (1494.29mm,-850mm) { $\nu$ };
\node [black] at (902.86mm,1200mm) { $\mu$ };
\node [black] at (768.57mm,86.65mm) { $e_1$ };
\end{scope}
\end{tikzpicture} \; , \;
\begin{tikzpicture}[yscale=-1,scale=0.015,baseline={([yshift=-.5ex]current bounding box.center)}]
\begin{scope}[shift={(0.00mm,719.29mm)}]
\draw [fill=none,draw=black] (887.14mm,53.08mm)
-- (281.43mm,-701.21mm)
;
\draw [fill=none,draw=black] (886.64mm,53.08mm)
-- (1492.36mm,-701.21mm)
;
\draw [fill=none,draw=black] (886.79mm,52.41mm)
-- ++(0.00mm,953.58mm)
;
\node [black] at (302.86mm,-850mm) { $\lambda$ };
\node [black] at (1494.29mm,-850mm) { $\nu$ };
\node [black] at (902.86mm,1200mm) { $\mu$ };
\node [black] at (768.57mm,86.65mm) { $e_2$ };
\end{scope}
\end{tikzpicture} \; , \dots
\]
\noindent and let
\[
\begin{tikzpicture}[yscale=-1,scale=0.015,baseline={([yshift=-.5ex]current bounding box.center)}]
\begin{scope}[shift={(0.00mm,719.29mm)}]
\draw [fill=none,draw=black] (888.57mm,251.04mm)
-- (282.86mm,1005.33mm)
;
\draw [fill=none,draw=black] (888.79mm,250.68mm)
-- (1494.50mm,1004.97mm)
;
\draw [fill=none,draw=black] (888.93mm,251.35mm)
-- ++(0.00mm,-953.58mm)
;
\node [black] at (157.14mm,1200.93mm) { $\lambda$ };
\node [black] at (1462.86mm,1200mm) { $\nu$ };
\node [black] at (840.00mm,-800.78mm) { $\mu$ };
\node [black] at (700mm,200mm) { $e_1$ };
\end{scope}
\end{tikzpicture} \; , \;
\begin{tikzpicture}[yscale=-1,scale=0.015,baseline={([yshift=-.5ex]current bounding box.center)}]
\begin{scope}[shift={(0.00mm,719.29mm)}]
\draw [fill=none,draw=black] (888.57mm,251.04mm)
-- (282.86mm,1005.33mm)
;
\draw [fill=none,draw=black] (888.79mm,250.68mm)
-- (1494.50mm,1004.97mm)
;
\draw [fill=none,draw=black] (888.93mm,251.35mm)
-- ++(0.00mm,-953.58mm)
;
\node [black] at (157.14mm,1200.93mm) { $\lambda$ };
\node [black] at (1462.86mm,1200mm) { $\nu$ };
\node [black] at (840.00mm,-800.78mm) { $\mu$ };
\node [black] at (700mm,200mm) { $e_2$ };
\end{scope}
\end{tikzpicture} \; , \dots
\]
\noindent be the dual basis of $\XX(\lambda \otimes \nu,\mu)$. We call these diagrams {\bf trivalent vertices}. It is important to notice that trivalent vertices are not canonically defined.
\end{Definition}

\begin{Definition} \label{def:fusion_graph}
Pick a distinguished simple object $X \in \XX$. The {\bf fusion graph} of $X$ has vertices $\Lambda$ and the edges from $\lambda$ to $\mu$ are the distinguished basis vectors in $\XX(\mu,\lambda \otimes X)$.
\end{Definition}

\begin{Proposition} \label{prop:hom_space_tree_basis}
Fix $\lambda \in \Lambda$. Then $\XX(\lambda,X^{\otimes n})$ has dimension the number of paths from the tensor unit to $\lambda$ in the fusion graph for $X$ of length $n$. Moreover, an explicit basis is given by string diagrams of the form
\[
\begin{tikzpicture}[yscale=-1,scale=0.03,baseline={([yshift=-.5ex]current bounding box.center)}]
\begin{scope}[shift={(0.00mm,719.29mm)}]
\draw [fill=black,draw=black] (659.15mm,-50.45mm) circle (4.80mm) ;
\draw [fill=black,draw=black] (743.15mm,45.55mm) circle (4.80mm) ;
\draw [fill=black,draw=black] (832.15mm,156.55mm) circle (4.80mm) ;
\draw [fill=none,draw=black] (270.72mm,-589.75mm)
-- ++(-164.05mm,166.88mm)
;
\draw [fill=none,draw=black] (502.00mm,-591.64mm)
-- ++(-318.00mm,328.00mm)
;
\draw [fill=none,draw=black] (16.66mm,-581.15mm)
-- (846.26mm,1037.66mm)
;
\draw [fill=none,draw=black] (262.24mm,-87.49mm)
-- ++(497.80mm,-492.15mm)
-- ++(0.00mm,0.00mm)
-- ++(0.00mm,0.00mm)
;
\draw [fill=none,draw=black] (341.44mm,62.41mm)
-- ++(692.96mm,-650.54mm)
-- ++(0.00mm,0.00mm)
;
\draw [fill=none,draw=black] (632.76mm,613.96mm)
-- (1749.99mm,-559.84mm)
;
\node [black] at (20.00mm,-395.64mm) { $f_1$ };
\node [black] at (90.00mm,-247.64mm) { $f_2$ };
\node [black] at (160.00mm,-71.64mm) { $f_3$ };
\node [black] at (256.00mm,96.36mm) { $f_4$ };
\node [black] at (500.00mm,628.36mm) { $f_n$ };
\node [black] at (12.00mm,-660mm) { $X$ };
\node [black] at (272.00mm,-660.64mm) { $X$ };
\node [black] at (488.00mm,-660.64mm) { $X$ };
\node [black] at (736.00mm,-660.64mm) { $X$ };
\node [black] at (1024.00mm,-660.64mm) { $X$ };
\node [black] at (1736.00mm,-660.64mm) { $X$ };
\node [black] at (848.00mm,1100mm) { $\lambda$ };
\end{scope}
\end{tikzpicture}
\]
\noindent In this diagram, each $f_i$ is a trivalent vertex of the form
\[
\begin{tikzpicture}[yscale=-1,scale=0.015,baseline={([yshift=-.5ex]current bounding box.center)}]
\begin{scope}[shift={(0.00mm,719.29mm)}]
\draw [fill=none,draw=black] (887.14mm,53.08mm)
-- (281.43mm,-701.21mm)
;
\draw [fill=none,draw=black] (886.64mm,53.08mm)
-- (1492.36mm,-701.21mm)
;
\draw [fill=none,draw=black] (886.79mm,52.41mm)
-- ++(0.00mm,953.58mm)
;
\node [black] at (302.86mm,-850mm) { $\lambda_{i}$ };
\node [black] at (1494.29mm,-850mm) { $X$ };
\node [black] at (902.86mm,1200mm) { $\lambda_{i+1}$ };
\node [black] at (768.57mm,86.65mm) { $f_i$ };
\end{scope}
\end{tikzpicture}
\]
\noindent we call such string diagrams {\bf trivalent basis vectors}
\end{Proposition}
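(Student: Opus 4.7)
The plan is to prove the proposition by induction on $n$, using the semisimple decomposition of $X^{\otimes n}$ and the definition of the fusion graph.

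For the base case $n=0$, we have $X^{\otimes 0} = \mathbf{1}$, so by semisimplicity $\XX(\lambda,\mathbf{1})$ is one-dimensional when $\lambda = \mathbf{1}$ and zero otherwise; the unique length-zero path in the fusion graph from $\mathbf{1}$ to itself matches this count, and the empty tree (consisting of just the strand labelled $\lambda = \mathbf{1}$) provides the basis vector.

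For the inductive step, I would assume the statement for $n$ and use the canonical isomorphism
\[
\XX(\lambda, X^{\otimes (n+1)}) \;=\; \XX(\lambda, X^{\otimes n} \otimes X) \;\cong\; \bigoplus_{\mu \in \Lambda} \XX(\mu, X^{\otimes n}) \otimes \XX(\lambda, \mu \otimes X),
\]
obtained by decomposing $X^{\otimes n}$ into simples via semisimplicity and dualising. Counting dimensions, the induction hypothesis gives $\dim \XX(\mu, X^{\otimes n})$ equal to the number of length-$n$ paths from $\mathbf{1}$ to $\mu$, while by Definition \ref{def:fusion_graph} the dimension of $\XX(\lambda,\mu \otimes X)$ equals the number of edges from $\mu$ to $\lambda$ in the fusion graph. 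Summing over $\mu$ gives the number of length-$(n+1)$ paths from $\mathbf{1}$ to $\lambda$, as required.

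To upgrade the dimension count to the explicit basis in the statement, I would take a tree basis vector for $\XX(\mu,X^{\otimes n})$ supplied by induction, tensor it on the right with $\mathrm{id}_X$, and post-compose with a trivalent vertex $f_n \in \XX(\lambda, \mu \otimes X)$ ranging over the distinguished basis. Unwinding the pictures, this is exactly the caterpillar-shaped diagram displayed in the proposition, with $\lambda_i = \mu$ being the label on the internal strand after step $i$. The linear independence of the resulting vectors is immediate from the direct-sum decomposition above together with induction, since vectors coming from different intermediate labels $\mu$ live in distinct summands, and within a fixed $\mu$ they are tensor products of independent sets.

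The only subtle point is to be careful that the isomorphism used in the inductive step is compatible with the choice of dual bases in Definition \ref{def:tree_string_diagrams}, so that composing tree bases genuinely produces the advertised basis rather than some rescaled version; this is a bookkeeping check using the fact that the two pictured trivalent vertices are chosen to be dual. No new ideas beyond semisimplicity and the definitions are needed, and I do not expect any serious obstacle.
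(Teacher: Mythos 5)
Your proof is correct and is essentially the argument the paper intends: its one-line proof ``decompose $X^{\otimes n}$ using the fusion graph'' is precisely the induction you carry out, peeling off one tensor factor of $X$ at a time via semisimplicity and matching the summands with edges of the fusion graph. The bookkeeping point you flag about compatibility with the dual bases of Definition \ref{def:tree_string_diagrams} is fine here, since only the spaces $\XX(\lambda,\mu\otimes X)$ and their distinguished bases enter the construction.
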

\begin{proof}
Decompose $X^{\otimes n}$ using the fusion graph for $X$.
\end{proof}
\begin{Definition} \label{def:matrix_units}
If $\XX$ is a semi-simple tensor category over $\mathbb{C}$ with finite dimensional morphism spaces, the Artin-Wedderburn Theorem implies that $\End(X^{\otimes n})$ is a product of matrix algebras. Proposition \ref{prop:hom_space_tree_basis} implies that in the trivalent basis, the matrix units in  $\End(X^{\otimes n})$ look like
\[
\begin{tikzpicture}[yscale=-1,scale=0.03,baseline={([yshift=-.5ex]current bounding box.center)}]
\begin{scope}[shift={(0.00mm,719.29mm)}]
\draw [fill=none,draw=black] (250.00mm,-500.49mm)
-- ++(118.57mm,-100.00mm)
-- ++(0.00mm,0.00mm)
;
\draw [fill=none,draw=black] (377.14mm,-407.64mm)
-- (592.86mm,-593.35mm)
;
\draw [fill=none,draw=black] (743.12mm,-134.98mm)
-- ++(464.67mm,-438.41mm)
;
\draw [fill=none,draw=black] (107.08mm,-608.33mm)
-- ++(798.02mm,593.97mm)
;
\draw [fill=none,draw=black] (274.00mm,777.81mm)
-- ++(118.57mm,100.00mm)
-- ++(0.00mm,0.00mm)
;
\draw [fill=none,draw=black] (401.14mm,684.95mm)
-- ++(215.71mm,185.71mm)
;
\draw [fill=none,draw=black] (767.12mm,412.30mm)
-- ++(464.67mm,438.41mm)
;
\draw [fill=none,draw=black] (131.08mm,885.64mm)
-- (929.10mm,291.67mm)
;
\draw [fill=none,draw=black] (901.43mm,-17.05mm)
.. controls (1141.87mm,158.38mm) and (1031.26mm,204.81mm) .. (929.00mm,291.64mm)
;
\node [black] at (1100mm,150mm) { $\lambda$ };
\node [black] at (145.71mm,-464.78mm) { $e_1$ };
\node [black] at (297.14mm,-353.35mm) { $e_2$ };
\node [black] at (665.71mm,-84.78mm) { $e_n$ };
\node [black] at (208.34mm,700.86mm) { $f_1$ };
\node [black] at (370.78mm,550.34mm) { $f_2$ };
\node [black] at (709.90mm,300.67mm) { $f_n$ };
\draw [fill=black,draw=black] (596.18mm,-401.93mm) ellipse (2.65mm and 2.53mm) ;
\draw [fill=black,draw=black] (634.47mm,-373.36mm) ellipse (2.65mm and 2.53mm) ;
\draw [fill=black,draw=black] (673.76mm,-341.21mm) ellipse (2.65mm and 2.53mm) ;
\draw [fill=black,draw=black] (657.13mm,687.36mm) ellipse (2.65mm and 2.53mm) ;
\draw [fill=black,draw=black] (727.74mm,639.16mm) ellipse (2.65mm and 2.53mm) ;
\draw [fill=black,draw=black] (782.18mm,601.60mm) ellipse (2.65mm and 2.53mm) ;
\end{scope}
\end{tikzpicture}
\]
\noindent Equivalently, the irreducible representations of $\End(X^{\otimes n})$ are parameterized by the simple objects in $\XX$ which have a length $n$ path from the tensor unit in the fusion graph for $X$. The string diagrams defined in Proposition \ref{prop:hom_space_tree_basis} form a basis for the corresponding representation.
\end{Definition}

\begin{Definition} \label{ex:main_example}

The {\bf Iwahori-Hecke algebra}, denoted by $H_m$, is the algebra generated over $\mathbb{C}(a)$ by $1,g_1,\dots,g_{m-1}$ subject to the relations
\begin{align*}
g_i g_{i+1} g_i &= g_{i+1} g_i g_{i+1} \\
g_i g_j &= g_j g_i \qquad \text{if \quad  $\lvert i - j \lvert \geq 2$} \\
g_i^2 &= (a - a^{-1})g_i + 1.
\end{align*}
We define the category $H$ which has objects the natural numbers and morphisms
\[
H(m,n) =
\begin{cases}
H_m & m = n \\
0 & {\rm otherwise}.
\end{cases}
\]
The inclusion $H_m \otimes H_n \to H_{m+n}$ defined by $g_i \otimes g_j \mapsto g_i g_{m+j}$ equips $H$ with a tensor structure. We define $\mathcal{H} \subseteq [H^{\rm op},\Vect]$ to be the idempotent completion of $H$. The monoidal structure on $H$ extends to $\mathcal{H}$ via Day convolution. Morally, the category $\mathcal{H}$ can be described as finite dimensional type $1$ representations of the quantum group $U_a(\mathfrak{gl}_{\infty})$. The Grothendieck ring for $\mathcal{H}$ has basis given by partitions and multiplication given by the Littlewood-Richardson rule. A special case of the Littlewood-Richardson rule is the Pieri rule:
\[
\lambda \otimes \ydiagram{1} = \sum_{\lambda \subset \mu \vdash n+1} \mu
\]
This implies that the fusion graph for $\ydiagram{1}$ is Young's graph:
\[
\input{tikz_pictures/youngs_graph.tex}
\]
Paths in the Young graph are in bijection with standard partition fillings. It follows that the trivalent basis vectors
\[
\begin{tikzpicture}[yscale=-1,scale=0.03,baseline={([yshift=-.5ex]current bounding box.center)}]
\begin{scope}[shift={(0.00mm,719.29mm)}]
\draw [fill=black,draw=black] (659.15mm,-50.45mm) circle (4.80mm) ;
\draw [fill=black,draw=black] (743.15mm,45.55mm) circle (4.80mm) ;
\draw [fill=black,draw=black] (832.15mm,156.55mm) circle (4.80mm) ;
\draw [fill=none,draw=black] (270.72mm,-589.75mm)
-- ++(-164.05mm,166.88mm)
;
\draw [fill=none,draw=black] (502.00mm,-591.64mm)
-- ++(-318.00mm,328.00mm)
;
\draw [fill=none,draw=black] (16.66mm,-581.15mm)
-- (846.26mm,1037.66mm)
;
\draw [fill=none,draw=black] (262.24mm,-87.49mm)
-- ++(497.80mm,-492.15mm)
-- ++(0.00mm,0.00mm)
-- ++(0.00mm,0.00mm)
;
\draw [fill=none,draw=black] (341.44mm,62.41mm)
-- ++(692.96mm,-650.54mm)
-- ++(0.00mm,0.00mm)
;
\draw [fill=none,draw=black] (632.76mm,613.96mm)
-- (1749.99mm,-559.84mm)
;
\node [black] at (-50mm,-395.64mm) { $e_{m+1}$ };
\node [black] at (0.00mm,-247.64mm) { $e_{m+2}$ };
\node [black] at (50.00mm,-71.64mm) { $e_{m+3}$ };
\node [black] at (140.00mm,96.36mm) { $e_{m+4}$ };
\node [black] at (500.00mm,628.36mm) { $e_n$ };
\node [black] at (12.00mm,-660mm) { $\lambda$ };
\node [black] at (272.00mm,-660.64mm) { $\ydiagram{1}$ };
\node [black] at (488.00mm,-660.64mm) { $\ydiagram{1}$ };
\node [black] at (736.00mm,-660.64mm) { $\ydiagram{1}$ };
\node [black] at (1024.00mm,-660.64mm) { $\ydiagram{1}$ };
\node [black] at (1736.00mm,-660.64mm) { $\ydiagram{1}$ };
\node [black] at (848.00mm,1100mm) { $\mu$ };
\end{scope}
\end{tikzpicture}
\]
are in bijection (up to scaling) with standard skew tableaux of shape $\mu \backslash \lambda$. We abuse notation and identify these tree basis vectors with the corresponding standard skew tableaux. In \cite{MR1427801}, Ram and Leduc computed semi-normal forms for the Iwahori-Hecke algebras. More precisely, suppose that $\lambda \subseteq \mu \vdash n+2$ are partitions such that $\mu \backslash \lambda$ is not contained in a single row or column. Then there are exactly two partitions which satisfy $\lambda \subseteq \nu \subseteq \mu$. Call them $\nu$ and $\nu'$. The multiplicity space $\mathcal{H}(\mu,\lambda \otimes \ydiagram{1} \otimes \ydiagram{1})$ is 2-dimensional with basis
\[
\begin{tikzpicture}[yscale=-1,scale=0.015,baseline={([yshift=-.5ex]current bounding box.center)}]
\begin{scope}[shift={(0.00mm,719.29mm)}]
\draw [fill=none,draw=black] (125.71mm,-716.21mm)
-- (1522.86mm,1058.08mm)
;
\draw [fill=none,draw=black] (880.00mm,-716.21mm)
-- ++(-402.86mm,445.71mm)
-- ++(0.00mm,0.00mm)
;
\draw [fill=none,draw=black] (891.43mm,258.08mm)
-- (1708.57mm,-713.35mm)
;
\node [black] at (200mm,-461.92mm) { $\lambda$ };
\node [black] at (450mm,100mm) { $\nu$ };
\node [black] at (900mm,700mm) { $\mu$ };
\end{scope}
\end{tikzpicture} \; , \;
\begin{tikzpicture}[yscale=-1,scale=0.015,baseline={([yshift=-.5ex]current bounding box.center)}]
\begin{scope}[shift={(0.00mm,719.29mm)}]
\draw [fill=none,draw=black] (125.71mm,-716.21mm)
-- (1522.86mm,1058.08mm)
;
\draw [fill=none,draw=black] (880.00mm,-716.21mm)
-- ++(-402.86mm,445.71mm)
-- ++(0.00mm,0.00mm)
;
\draw [fill=none,draw=black] (891.43mm,258.08mm)
-- (1708.57mm,-713.35mm)
;
\node [black] at (200mm,-461.92mm) { $\lambda$ };
\node [black] at (450mm,100mm) { $\nu'$};
\node [black] at (1000mm,700mm) { $\mu$ };
\end{scope}
\end{tikzpicture}
\]
and $g_1$ acts via the matrix
\[
m(g_1) =
\begin{pmatrix}
a^{d}/[d] & [d-1][d+1]/[d]^2 \\
1 & a^{-d} / [-d]
\end{pmatrix}
\]
where
\[
[n] = \frac{a^n - a^{-n}}{a-a^{-1}}
\]
and $d = d_1 + d_2$ is the axial distance in $\mu \backslash \lambda$:
\[
\begin{tikzpicture}[yscale=-1,scale=0.04,baseline={([yshift=-.5ex]current bounding box.center)}]
\begin{scope}[shift={(0.00mm,719.29mm)}]
\draw [fill=none,draw=black,dashed] (162.86mm,-399.07mm)
-- ++(0.00mm,1145.71mm)
-- ++(302.86mm,0.00mm)
-- ++(0.00mm,-334.29mm)
-- ++(360.00mm,0.00mm)
-- ++(0.00mm,-348.57mm)
-- ++(382.86mm,0.00mm)
-- ++(5.71mm,-454.29mm)
;
\draw [fill=none,draw=black,dashed] (162.86mm,-396.21mm)
-- ++(1051.43mm,0.00mm)
-- ++(0.00mm,6.34mm)
;
\draw [fill=none,draw=black] (824.29mm,63.08mm)
-- (940.00mm,63.79mm)
-- ++(0.00mm,110.71mm)
-- ++(-115.00mm,0.00mm)
-- cycle
;
\draw [fill=none,draw=black] (462.65mm,412.45mm)
-- ++(115.71mm,0.71mm)
-- ++(0.00mm,110.71mm)
-- ++(-115.00mm,0.00mm)
-- cycle
;
\draw [fill=none,draw=black, line width = 0.5mm] (518.90mm,470.44mm)
-- (517.71mm,119.21mm)
-- (880.48mm,118.91mm)
;
\node [black] at (657.10mm,40.28mm) { $d_1$ };
\node [black] at (430.53mm,270.26mm) { $d_2$ };
\end{scope}
\end{tikzpicture}
\]
More formally, if $\mu \backslash \lambda$ contains the boxes $(a_1,b_1),(a_2,b_2)$, then the axial distance is defined by $d = \lvert a_1 - a_2 \lvert + \lvert b_1 - b_2 \lvert$. These formulas are quantum analogues of the well known Young semi-normal form for the representation theory of the symmetric group \cite{MR644144}. Indeed, when $a \to 1$, they recover they classical Young semi-normal formulas.

\end{Definition}

\section{Morita Theory} \label{sec:morita_theory}

In this section, we prove a very mild generalization of classical Morita theory. In classical Morita theory, we replace an object with its presentation with respect to a single projective. We are going to replace an object with its presentation with respect to a family of projectives. For the remainder of this section, $\XX$ is an abelian category enriched over $\Vect_k$, closed under colimits, $\DD$ is a category enriched over $\Vect_k$ and $D : \DD^{\rm op} \to \XX$ is a functor.

\begin{Theorem} \label{thm:morita_theory_main_thm}
If $\XX$ has enough projectives, then $\XX$ is equivalent to the category of representations of $\DD$ where $\DD^{\rm op}$ is a full subcategory of $\XX$ whose objects are compact, projective and generate $\XX$.
\end{Theorem}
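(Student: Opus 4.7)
The plan is to exhibit an adjoint equivalence $G \dashv F$ between $\Rep(\DD) = [\DD, \Vect_k]$ and $\XX$. Define $F \colon \XX \to \Rep(\DD)$ by $F(X)(d) = \XX(D(d), X)$, with the $\DD$-action inherited from functoriality of $D$. Define $G \colon \Rep(\DD) \to \XX$ by the coend
\[
G(M) = \int^{d \in \DD} M(d) \otimes D(d),
\]
which makes sense because $\XX$ is cocomplete and $\Vect_k$-enriched (tensoring with a vector space is a coproduct in $\XX$). The universal property of the coend gives a natural bijection $\XX(G(M), X) \cong \Rep(\DD)(M, F(X))$, so $G \dashv F$.

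To show the counit $\varepsilon_X \colon GF(X) \to X$ is an isomorphism, start from the enriched co-Yoneda calculation: $GF(D(d)) = G(\DD(-, d)) = D(d)$, where the first equality uses $\XX(D(-), D(d)) = \DD(-, d)$, i.e.\ fullness of $D$. Since the $D(d)$'s generate $\XX$, every $X$ fits in a presentation
\[
\bigoplus_{j} D(d_j) \to \bigoplus_{i} D(d_i) \to X \to 0.
\]
As a left adjoint, $G$ preserves all colimits. Compactness of each $D(d)$ makes $F$ commute with the coproducts of $D(d)$'s, and projectivity of each $D(d)$ makes $F$ right exact, so $F$ carries the presentation to a presentation in $\Rep(\DD)$; applying $G$ and comparing with the original via naturality of $\varepsilon$ forces $\varepsilon_X$ to be an isomorphism. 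The unit $\eta_M \colon M \to FG(M)$ is handled symmetrically: every $M$ is a colimit of representables $h_d = \DD(-, d)$, one has $G(h_d) = D(d)$, and $FG(h_d)(e) = \XX(D(e), D(d)) = \DD(e, d) = h_d(e)$ (again by fullness), so $\eta$ is an isomorphism on representables and hence on all of $\Rep(\DD)$.

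The main obstacle is the careful bookkeeping of which colimits $F$ preserves, and this is where each of the three hypotheses enters essentially: \emph{generation} produces the presentation of $X$; \emph{compactness} is exactly what makes $F$ commute with the coproducts of $D(d)$'s appearing in the presentation; \emph{projectivity} is exactly what makes $F$ preserve the closing cokernel. Once these three conditions are aligned with the three places they appear, the equivalence is formal — the substantive content is already packaged in the co-Yoneda lemma and the universal property of the coend.
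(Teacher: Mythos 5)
Your proposal is correct and follows essentially the same route as the paper: the same coend/hom adjunction, with compactness and projectivity used to make $\XX(D(d),-)$ preserve the relevant colimits, and generation used to produce a presentation of each object by coproducts of the $D(d)$'s. The only differences are cosmetic --- the paper verifies the unit directly for arbitrary $V$ by pulling $\XX(D(d),-)$ through the coend rather than reducing to representables and extending along colimits, and where you invoke ``fullness'' of $D$ you in fact need full \emph{faithfulness}, which is exactly what the hypothesis that $\DD^{\rm op}$ is a full subcategory of $\XX$ provides.
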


We can prove this in a very clean way using coends. They can be motivated as follows: Suppose that $A$ is a $k$-algebra, $M$ is a left $A$-module and $N$ is a right $A$-module. Then we can form the tensor product $M \otimes_A N$ which is a vector space. It is built by taking the tensor product $M \otimes_k N$ and quotienting by the relations
$$ a m \otimes n = m \otimes n a. $$
We can generalize the second step in the following way. Suppose that $ F : \DD \otimes_k \DD^{\rm op} \to \Vect_k $ is a bifunctor. Then we can form the vector space
\[\int^{d \in \DD} F = \bigoplus_{d \in \DD} F(d,d) \; / \; fv = vf \qquad v \in F(d,d'), f : d' \to d.\] This vector space is called the {\bf coend} of $F$. We can use coends to generalize tensor products from modules to functors. Suppose that $F : \DD \to \Vect$ and $G : \DD^{\rm op} \to \Vect$ are functors. Then we define
\[
F \otimes_{\DD} G = \int^{d \in D} F(d) \otimes G(d).
\]
A clear exposition of the theory of coends can be found in \cite{MR3221774}. Let $\DD$ a category enriched over $\Vect$. Suppose that we have a functor $D : \DD^{\rm op} \to \XX$. Then we get a functor
\begin{align*}
&\XX \to [\DD,\Vect] \\
& X \mapsto \XX(D(-),X)
\end{align*}
This functor has a left adjoint given by
\begin{align*}
&[\DD,\Vect] \to \XX \\
&V \mapsto V \otimes_{\DD} D = \int^d V_d \otimes D^d
\end{align*}
The following computation demonstrates why these functors are adjoint:
\begin{align*}
\XX(V \otimes_{\DD} D,X) &= \XX \left( \int^d V_d \otimes D^d,X \right) \\
&= \int_d \XX(V_d \otimes D^d,X) \\
&= \int_d \hom(V_d,\XX(D^d,X)) \\
&= [\DD,\Vect](V,\XX(D(-),X))
\end{align*}
\begin{Definition}
  We call $X \in \XX$ a {\bf compact} object if $\XX(X,-)$ commutes with filtered colimits.
  \end{Definition}
\begin{Proposition} \label{prop:fully_faithful}
Assume that $D$ is fully faithful and each $D(d)$ is projective and compact. Then $[\DD,\Vect] \to \XX$ is fully faithful.
\end{Proposition}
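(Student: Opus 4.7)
The plan is to show that the unit $\eta_V : V \to \XX(D(-), V \otimes_{\DD} D)$ of the adjunction is a natural isomorphism; since a left adjoint is fully faithful precisely when its unit is an isomorphism, this gives the result. Evaluated at $d_0 \in \DD$, the component $\eta_V(d_0)$ is the map $V(d_0) \to \XX(D(d_0), V \otimes_{\DD} D)$ canonically associated to the coend.

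The key computation is to push $\XX(D(d_0), -)$ through the coend
\[
V \otimes_{\DD} D = \int^{d} V(d) \otimes D(d),
\]
which is a cokernel of a map between two direct sums (one indexed by morphisms of $\DD$, the other by objects). Because $\XX$ is abelian and cocomplete, this coequalizer presentation is available. The hypothesis that each $D(d_0)$ is compact means $\XX(D(d_0),-)$ commutes with filtered colimits and hence with arbitrary direct sums (which are filtered colimits of finite ones, and finite direct sums in an abelian category are biproducts that any hom functor preserves). The hypothesis that $D(d_0)$ is projective makes $\XX(D(d_0),-)$ right exact, so it preserves cokernels. Taken together, $\XX(D(d_0),-)$ commutes with the coend, giving
\[
\XX\!\left(D(d_0), \int^d V(d) \otimes D(d)\right) \;\cong\; \int^d V(d) \otimes \XX(D(d_0), D(d)).
\]

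Now full faithfulness of $D$ converts $\XX(D(d_0), D(d))$ into $\DD(d_0, d)$, after which the co-Yoneda (ninja Yoneda) lemma collapses the coend:
\[
\int^d V(d) \otimes \DD(d_0, d) \;\cong\; V(d_0).
\]
Tracing through the isomorphisms verifies that the composite is precisely $\eta_V(d_0)$, so the unit is an isomorphism componentwise and hence as a natural transformation. This proves that $V \mapsto V \otimes_{\DD} D$ is fully faithful.

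The main obstacle is the interchange of $\XX(D(d_0),-)$ with the coend; this is the only place both hypotheses (compactness and projectivity) are essential, and it is important to verify that the coequalizer presentation of the coend is preserved in the abelian enriched setting used here. Once that step is secured, full faithfulness of $D$ and the co-Yoneda lemma finish the proof with no further input.
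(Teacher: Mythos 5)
Your proof is correct and follows essentially the same route as the paper: reduce full faithfulness of the left adjoint to the unit of the adjunction being an isomorphism, check this pointwise, use compactness and projectivity to pull $\XX(D(d_0),-)$ through the coend, and then finish with full faithfulness of $D$ and the co-Yoneda lemma (your version is slightly more careful, separating preservation of direct sums from preservation of cokernels, where the paper just asserts that $\XX(D(d),-)$ commutes with all colimits). The only quibble is a variance slip: since $D$ is contravariant, $\XX(D(d_0),D(d))\cong\DD(d,d_0)$ rather than $\DD(d_0,d)$, which is what the co-Yoneda lemma needs for the covariant functor $V$; this does not affect the substance of the argument.
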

\begin{proof}
We need to prove that the unit
$$V \to \XX(D(-),V \otimes_{\DD} D)$$
is an isomorphism. It suffices to prove this pointwise, so we need to prove that the linear map
$$ V(d) \to \XX(D(d),V \otimes_{\DD} D)$$
is an isomorphism. Since $D(d)$ is projective and compact, it follows that $\XX(D(d),-)$ commutes with all colimits. Therefore
\begin{align*}
\XX \left( D(d),\int^x V(x) \otimes D(x) \right) &= \int^x V(x) \otimes \XX(D(d),D(x)) \\
&= \int^x V(x) \otimes \DD(x,d) \\
&= V(d)
\end{align*}
The second equality is true because $\DD$ is fully faithful.
\end{proof}
\begin{Proposition} \label{prop:essentially_surjective}
In addition to the hypotheses of proposition \ref{prop:fully_faithful}, assume that every $X \in \XX$ admits an epimorphism $\bigoplus_i D(d_i) \to X$ for some family $\{ d_i\}$. Then $[\DD,\Vect] \to \XX$ is essentially surjective.
\end{Proposition}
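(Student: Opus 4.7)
The plan is to represent an arbitrary $X\in\XX$ as a cokernel of a map between objects that manifestly lie in the essential image of the functor $(-)\otimes_\DD D$, and then use the fact that this functor is a left adjoint (hence cokernel-preserving) to conclude. The input needed to start is exactly the new hypothesis: for any $X \in \XX$, there is an epimorphism $P_0 := \bigoplus_{i\in I} D(d_i) \twoheadrightarrow X$, and applying the hypothesis again to $K:=\ker(P_0\to X)$ yields an epimorphism $P_1 := \bigoplus_{j\in J} D(d_j')\twoheadrightarrow K$, giving a two-term presentation $P_1\xrightarrow{\varphi} P_0\to X\to 0$.

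Next I would observe that each $P_k$ is literally in the image of $(-)\otimes_\DD D$: by the coend form of the Yoneda lemma,
\[
\Bigl(\bigoplus_{i} \DD(-,d_i)\Bigr)\otimes_\DD D \;=\; \bigoplus_i\int^{x} \DD(x,d_i)\otimes D(x) \;=\; \bigoplus_i D(d_i).
\]
Set $V_0:=\bigoplus_i \DD(-,d_i)$ and $V_1:=\bigoplus_j \DD(-,d_j')$; these are objects of $[\DD,\Vect]$ that the functor sends to $P_0$ and $P_1$ respectively.

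The key lift is the map $\varphi\colon P_1\to P_0$. By Proposition \ref{prop:fully_faithful} the functor $(-)\otimes_\DD D$ is fully faithful, so $\varphi$ comes from a unique morphism $\widetilde{\varphi}\colon V_1\to V_0$ in $[\DD,\Vect]$. Let $V:=\coker(\widetilde{\varphi})$ in $[\DD,\Vect]$, which exists and is computed pointwise because $\Vect$ is cocomplete and abelian. Since $(-)\otimes_\DD D$ is a left adjoint, it preserves all colimits, and in particular cokernels, so
\[
V\otimes_\DD D \;=\; \coker\bigl(\widetilde{\varphi}\otimes_\DD D\bigr) \;=\; \coker(\varphi) \;=\; X.
\]
This exhibits $X$ as the image of $V$, proving essential surjectivity.

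The step I expect to require the most care is invoking full faithfulness to lift $\varphi$ rather than merely producing \emph{some} preimage by other means: one must check that the explicit free objects $V_0, V_1$ built out of representables really are the $[\DD,\Vect]$-objects whose images are $P_0,P_1$, and that the adjoint's preservation of coproducts commutes with the coend, so that the lifted cokernel diagram genuinely matches the presentation of $X$. Everything else is formal from the adjunction established before Proposition \ref{prop:fully_faithful}.
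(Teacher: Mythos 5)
Your argument is correct and is essentially the paper's proof: both build a two-term presentation $P_1 \to P_0 \to X \to 0$ by objects in the essential image, lift the map $P_1 \to P_0$ through the fully faithful functor $-\otimes_{\DD} D$, and conclude using that this left adjoint is right exact. The only real difference is cosmetic — the paper takes $P_0 = \XX(D(-),X)\otimes_{\DD} D$ via the counit, whereas you take sums of representables (which, under the paper's conventions $D : \DD^{\rm op}\to\XX$ with $V \in [\DD,\Vect]$ covariant, should be $\bigoplus_i \DD(d_i,-)$ rather than $\bigoplus_i \DD(-,d_i)$ for the co-Yoneda reduction to give $\bigoplus_i D(d_i)$).
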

\begin{proof}
By assumption, it follows that for every $X \in \XX$, the counit
$$ \XX(D(-),X) \otimes_{\DD} D \to X$$
is an epimorphism. Then we have an exact sequence
$$ 0 \to K \to \XX(D(-),X) \otimes_{\DD} D \to X \to 0$$
This gives us an exact sequence
$$ \XX(D(-),K) \otimes_{\DD} D \to \XX(D(-),X) \otimes_{\DD} D \to X \to 0$$
Since $-\otimes_{\DD} D$ is fully faithful, we can write the first map as $f \otimes_{\DD} D$ for some map $f : \XX(D(-),K) \to \XX(D(-),X)$. Since $- \otimes_{\DD} D$ is right exact, it follows that $X = \coker f \otimes_{\DD} D$. This proves essential surjectivity.
\end{proof}

\noindent {\em Proof of theorem \ref{thm:morita_theory_main_thm}.} Let $\DD^{\rm op}$ be a full subcategory of $\XX$ whose objects are compact, projective and generate $\XX$. Let $D : \DD^{\rm op} \to \XX$ be the embedding. By proposition, \ref{prop:fully_faithful}, the functor $ - \otimes_{\DD} D : [\DD,\Vect] \to \XX$ if fully faithful. By Proposition \ref{prop:essentially_surjective}, the functor is essentially surjective. \qed

\begin{Definition} \label{def:minimal_model}
  If $\XX$ is an abelian category with enough compact projectives, define $M(\XX)$ to be the opposite of the full subcategory with objects the indecomposable compact projectives. We call $M(\XX)$ the {\bf minimal model} for $\XX$. By theorem \ref{thm:morita_theory_main_thm}, the functor category $[M(\XX),\Vect]$ is equivalent to $\XX$.
\end{Definition}

\section{Modules over Tensor Algebras} \label{sec:modules_over_tensor_algebras}

In this section, we work inside a fixed semi-simple tensor category $\mathcal{C}$. We use Morita theory to study the category of modules over an algebra internal to $\mathcal{C}$. Choose a distinguished simple object $X \in \mathcal{C}$. Define
\[
T = \bigoplus_{n \geq 0} X^{\otimes n}
\]
This is the tensor algebra generated by $X$. Define $\Rep(T)$ to be the category of right modules over $T$ internal to $\mathcal{C}$. The forgetful functor $F : \Rep(T) \to \mathcal{C}$ has left adjoint $L : \mathcal{C} \to \Rep(T)$ defined by $V \mapsto V \otimes T$. Since the right adjoint $F$ is exact, it follows that $L$ preserves projectives. Define
\[
T^{+} = \bigoplus_{n \geq 1} X^{\otimes n}
\]

\begin{Lemma} \label{lem:indecomposable_projectives}
  If $V \in \mathcal{C}$ is irreducible, then $V \otimes T$ is an indecomposable projective in $\Rep(T)$.
\end{Lemma}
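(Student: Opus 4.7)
The plan is to prove projectivity via the adjunction $L \dashv F$ and indecomposability by showing that $\End_{\Rep(T)}(V \otimes T)$ is a connected graded $\mathbb{C}$-algebra, which forces its only idempotents to be $0$ and $1$.

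For projectivity, I would observe that because $\mathcal{C}$ is semisimple, every irreducible object is projective in $\mathcal{C}$. Since $F : \Rep(T) \to \mathcal{C}$ is exact, its left adjoint $L$ preserves projectives (this is the standard fact that $\Hom_{\Rep(T)}(L(-),-) = \Hom_{\mathcal{C}}(-,F(-))$ is exact in the first slot whenever the first argument is projective in $\mathcal{C}$). Hence $V \otimes T = L(V)$ is projective in $\Rep(T)$.

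For indecomposability, I would compute the endomorphism ring using the adjunction and the grading on $T$:
\[
\End_{\Rep(T)}(V \otimes T) \;=\; \Hom_{\mathcal{C}}(V, F(V \otimes T)) \;=\; \bigoplus_{n \geq 0} \Hom_{\mathcal{C}}(V, V \otimes X^{\otimes n}).
\]
Under the correspondence $f \leftrightarrow \bar f$, where $\bar f : V \otimes T \to V \otimes T$ is the right $T$-linear extension of $f : V \to V \otimes X^{\otimes n} \hookrightarrow V \otimes T$, composition $\bar g \circ \bar f$ for $f$ of degree $m$ and $g$ of degree $n$ is the map
\[
V \xrightarrow{\; f \;} V \otimes X^{\otimes m} \xrightarrow{\; g \otimes 1_{X^{\otimes m}} \;} V \otimes X^{\otimes n} \otimes X^{\otimes m},
\]
which lies in degree $n+m$. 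So $\End_{\Rep(T)}(V \otimes T)$ is a graded algebra. Since $V$ is simple, Schur's lemma gives the degree-zero part $\End_{\mathcal{C}}(V) = \mathbb{C}$, i.e., the algebra is connected graded.

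Finally, I would conclude by the standard idempotent argument for connected graded algebras: if $e = \sum_n e_n$ satisfies $e^2 = e$, then $e_0^2 = e_0$ forces $e_0 \in \{0,1\}$; in the case $e_0 = 0$, if $e \neq 0$ then letting $k \geq 1$ be minimal with $e_k \neq 0$ yields $e_k = (e^2)_k = 0$ (since any product of positive-degree terms has degree $\geq 2k > k$), a contradiction; the case $e_0 = 1$ reduces to the previous case by considering $1-e$. Hence the endomorphism algebra has no nontrivial idempotents, and $V \otimes T$ is indecomposable. The only subtle point is verifying that the composition product in $\End_{\Rep(T)}(V \otimes T)$ really is homogeneous under the adjunction identification; beyond this bookkeeping the argument is mechanical.
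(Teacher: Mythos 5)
Your proof is correct, but it takes a genuinely different route from the paper's. You compute the endomorphism ring $\End_{\Rep(T)}(V \otimes T) \cong \bigoplus_{n \geq 0} \hom_{\mathcal{C}}(V, V \otimes X^{\otimes n})$ via the adjunction, check that composition respects the grading, observe that the degree-zero piece is $\End_{\mathcal{C}}(V)$ (strictly a division ring by Schur's lemma rather than $\mathbb{C}$ --- the main example lives over $\mathbb{C}(a)$ --- but this changes nothing, since a division ring has no idempotents besides $0$ and $1$), and invoke the standard fact that a connected graded algebra has no nontrivial idempotents. The paper instead argues directly with submodules in Nakayama style: from a decomposition $V \otimes T = A \oplus B$ it reduces modulo $T^+$ to get $V = A/AT^+ \oplus B/BT^+$, uses irreducibility of $V$ to force (say) $A = AT^+$, hence $A = A(T^+)^{N+1}$ for all $N$, and then uses the positive grading to show that any irreducible subobject of $A$ would have to sit in arbitrarily high degree, so $A = 0$. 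Both arguments ultimately rest on the same two ingredients: the positive grading of $T$ and the compactness of an irreducible object of $\mathcal{C}$ (you need it to commute $\hom_{\mathcal{C}}(V,-)$ past the infinite direct sum defining $T$; the paper needs it to place an irreducible subobject of $A$ inside a finite truncation). Your version has the added benefit of identifying the endomorphism ring explicitly, which fits the Morita-theoretic computations carried out later in the paper; the paper's version avoids the bookkeeping of verifying that composition is homogeneous under the adjunction identification, which is the one step you correctly flag as requiring care.
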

\begin{proof}
  Since $V \otimes T = L(V)$, the module is projective. Suppose that $V \otimes T = A \oplus B$ as $T$-modules. When we tensor with $T / T^+$, we get
  \[
  V = A / A T^+ \oplus B / B T^+
  \]
  in $\mathcal{C}$. Since $V$ is irreducible in $\mathcal{C}$, we can assume without loss of generality that $A / AT^{+} = 0$. Suppose that $A \not= 0$. Choose $0 \not= Y \subseteq A \subseteq V \otimes T$ irreducible in $\mathcal{C}$. This implies that
  \[
  Y \subseteq \bigoplus_{n=0}^N V \otimes X^{\otimes n}
  \]
  for some large $N$. Since $A = A \left( T^+ \right)^{N+1}$, it follows that
  \[ Y \subseteq A \left( T^+ \right)^{N+1} \subseteq \bigoplus_{n \geq N+1} V \otimes X^{\otimes n}. \]
  This implies that $Y = 0$, which is a contradiction. Therefore we must have $A = 0$.
\end{proof}
\begin{Proposition} Let $G$ be the fusion graph for $X$ considered as a category where the objects are vertices and the morphisms are paths. Then $\Rep(T)$ is Morita equivalent to $[G,\Vect]$.
\end{Proposition}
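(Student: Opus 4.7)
The plan is to apply Theorem \ref{thm:morita_theory_main_thm} to the family $\{V_\lambda \otimes T\}_{\lambda \in \Lambda}$, where $V_\lambda$ runs over the simple objects of $\mathcal{C}$ (which we identify with the vertex set $\Lambda$ of $G$). Lemma \ref{lem:indecomposable_projectives} already supplies projectivity and indecomposability, and compactness follows from the free-forget adjunction $L \dashv F$: since $\Rep(T)(V_\lambda \otimes T,-) = \mathcal{C}(V_\lambda, F(-))$, and $F$ preserves filtered colimits (they are computed in $\mathcal{C}$), compactness of $V_\lambda \otimes T$ reduces to compactness of the simple object $V_\lambda$ in the semi-simple category $\mathcal{C}$. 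For the generation hypothesis, every $M \in \Rep(T)$ receives the action morphism $F(M) \otimes T \twoheadrightarrow M$, which is split by the unit of $T$; decomposing $F(M) = \bigoplus_i V_{\lambda_i}$ in the semi-simple category $\mathcal{C}$ then exhibits the required epimorphism from a direct sum of the chosen projectives onto $M$.

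The crux of the proof is to identify the full subcategory $\DD^{\rm op} \subseteq \Rep(T)$ on the objects $\{V_\lambda \otimes T\}$ with $G^{\rm op}$. By the adjunction,
\[
\Rep(T)(V_\lambda \otimes T, V_\mu \otimes T) \;=\; \mathcal{C}(V_\lambda, V_\mu \otimes T) \;=\; \bigoplus_{n \geq 0} \mathcal{C}(V_\lambda, V_\mu \otimes X^{\otimes n}).
\]
The argument of Proposition \ref{prop:hom_space_tree_basis} applies verbatim with the tensor unit replaced by $V_\mu$: decomposing $V_\mu \otimes X^{\otimes n}$ via the fusion graph of $X$ gives a distinguished basis for the $n$-th summand, indexed by length-$n$ paths from $\mu$ to $\lambda$ in $G$. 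Summing over $n$ produces a vector-space isomorphism $\DD^{\rm op}(\lambda,\mu) \cong G(\mu,\lambda)$, i.e.\ $\DD \simeq G$, and Theorem \ref{thm:morita_theory_main_thm} then yields $\Rep(T) \simeq [G, \Vect]$.

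The one genuine obstacle is checking that this bijection on morphism sets is compatible with composition. Under the tree-diagram picture, composing $V_\lambda \otimes T \to V_\mu \otimes T$ with $V_\mu \otimes T \to V_\nu \otimes T$ in $\Rep(T)$ becomes, after applying the free-forget adjunction and using the $T$-module structure on $V_\nu \otimes T$, the operation of stacking one trivalent tree on top of another and then resolving the composite in the trivalent basis. The claim is that this stacking matches path concatenation in $G^{\rm op}$ on the nose. This is where the associator of $\mathcal{C}$ enters and needs to be handled carefully, but once that bookkeeping is verified the rest of the argument is formal.
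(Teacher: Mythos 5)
Your proposal is correct and follows essentially the same route as the paper: exhibit the $\lambda \otimes T$ as compact projective generators, use the adjunction to compute $\hom_T(\mu \otimes T, \lambda \otimes T) = \mathcal{C}(\mu, \lambda \otimes T)$, and read off the trivalent basis of the right-hand side as paths in the fusion graph. The one step you defer --- that composition of these basis vectors is path concatenation --- is precisely what the paper verifies with its final diagram, and it holds with no correction factors because multiplication in $T$ is the canonical identification $X^{\otimes m} \otimes X^{\otimes n} \cong X^{\otimes m+n}$, so post-composition with the induced module map is literally stacking one tree onto the other.
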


\begin{proof}
  The indecomposable compact projectives $\lambda \otimes T$, where $\lambda$ is an irreducible in $\mathcal{C}$, generate $\Rep(T)$. Using the adjunction $(L,F) : \Rep(T) \to \mathcal{C}$, we have
\[
\hom_T(\mu \otimes T,\lambda \otimes T) = \mathcal{C}(\mu,\lambda \otimes T).
\]
The right hand side has a basis consisting of vectors of the form
\[
\begin{tikzpicture}[yscale=-1,scale=0.015,baseline={([yshift=-.5ex]current bounding box.center)}]
\begin{scope}[shift={(0.00mm,719.29mm)}]
\draw [fill=none,draw=black] (148.57mm,-716.21mm)
-- (1305.71mm,1052.36mm)
;
\draw [fill=none,draw=black] (444.47mm,-263.87mm)
-- (854.65mm,-719.24mm)
;
\draw [fill=none,draw=black] (606.16mm,-16.48mm)
-- (1188.84mm,-719.24mm)
;
\draw [fill=none,draw=black] (827.72mm,321.87mm)
-- (1634.78mm,-719.11mm)
;
\node [black] at (80.84mm,-950mm) { $\lambda$ };
\node [black] at (829.21mm,-950mm) { $X$ };
\node [black] at (1168.47mm,-950mm) { $X$ };
\node [black] at (1610.14mm,-950mm) { $X$ };
\node [black] at (1258.65mm,1200mm) { $\mu$ };
\end{scope}
\end{tikzpicture}
\]
which is exactly a path in the fusion graph for $X$ from $\lambda$ to $\mu$. Post composing with the corresponding morphism in $\hom_T(\mu \otimes T,\lambda \otimes T)$ is the map
\[
\begin{tikzpicture}[yscale=-1,scale=0.03,baseline={([yshift=-.5ex]current bounding box.center)}]
\begin{scope}[shift={(0.00mm,719.29mm)}]
\draw [fill=none,draw=black] (342.86mm,115.22mm)
-- ++(0.00mm,180.00mm)
-- ++(1011.43mm,0.00mm)
-- ++(0.00mm,-180.00mm)
-- cycle
;
\draw [fill=none,draw=black] (827.14mm,293.79mm)
-- ++(0.00mm,758.57mm)
;
\draw [fill=none,draw=black] (462.71mm,115.64mm)
-- ++(-2.08mm,-833.06mm)
;
\draw [fill=none,draw=black] (822.71mm,115.64mm)
-- ++(-2.08mm,-833.06mm)
;
\draw [fill=none,draw=black] (1202.71mm,115.64mm)
-- ++(-2.08mm,-833.06mm)
;
\node [black] at (828.57mm,-900mm) { $X$ };
\node [black] at (1225.71mm,-900mm) { $X$ };
\node [black] at (471.43mm,-900mm) { $\mu$ };
\node [black] at (850mm,1200mm) { $\nu$ };
\end{scope}
\end{tikzpicture}
\quad \mapsto
\begin{tikzpicture}[yscale=-1,scale=0.03,baseline={([yshift=-.5ex]current bounding box.center)}]
\begin{scope}[shift={(0.00mm,719.29mm)}]
\draw [fill=none,draw=black] (321.64mm,461.81mm)
-- ++(0.00mm,180.00mm)
-- ++(1011.43mm,0.00mm)
-- ++(0.00mm,-180.00mm)
-- cycle
;
\draw [fill=none,draw=black] (827.14mm,641.79mm)
-- ++(0.00mm,410.57mm)
;
\draw [fill=none,draw=black] (933.94mm,462.28mm)
-- (931.86mm,109.23mm)
;
\draw [fill=none,draw=black] (1081.88mm,461.91mm)
-- ++(-2.08mm,-1181.11mm)
;
\draw [fill=none,draw=black] (1212.95mm,461.91mm)
-- ++(-2.08mm,-1181.11mm)
;
\draw [fill=none,draw=black] (931.97mm,109.26mm)
.. controls (931.28mm,56.48mm) and (884.03mm,25.30mm) .. (848.00mm,-7.41mm)
;
\draw [fill=none,draw=black] (848.25mm,-7.33mm)
-- (52.50mm,-716.95mm)
;
\draw [fill=none,draw=black] (360.62mm,-719.65mm)
-- ++(-99.57mm,188.49mm)
;
\draw [fill=none,draw=black] (615.18mm,-719.65mm)
-- (429.30mm,-380.90mm)
;
\draw [fill=none,draw=black] (941.87mm,-713.99mm)
-- ++(-312.43mm,511.79mm)
;
\node [black] at (848.00mm,1150mm) { $\nu$ };
\node [black] at (25.00mm,-800mm) { $\lambda$ };
\node [black] at (350.10mm,-800mm) { $X$ };
\node [black] at (616.43mm,-800mm) { $X$ };
\node [black] at (941.25mm,-800mm) { $X$ };
\node [black] at (1080.63mm,-800mm) { $X$ };
\node [black] at (1226.97mm,-800mm) { $X$ };
\end{scope}
\end{tikzpicture}
\]
This implies that composition of basis vectors is exactly concatenation of paths in the fusion graph for $X$. This completes the proof.
\end{proof}

\begin{Example} \label{ex:minimal_model_tensor_algebra_first_fundamental}
Let $\mathcal{C} = \mathcal{H}$, which was defined in Example \ref{ex:main_example}, and let $X = \ydiagram{1}$. The fusion graph for $X$ has objects partitions and the edges $G(\lambda,\mu)$ are the standard skew tableaux of shape $\mu \backslash \lambda$.
\end{Example}

\section{Modules over the quantum symmetric algebra} \label{sec:modules_over_the_quantum_symmetric_algebra}

In this section, we work inside the category $\mathcal{H}$ defined in Example \ref{ex:main_example}. Define $T = \bigoplus_{n \geq 0} \ydiagram{1}^{\otimes n}$. Consider the submodule $I$ of $T$ spanned by all maps
\[
\begin{tikzpicture}[yscale=-1,scale=0.03,baseline={([yshift=-.5ex]current bounding box.center)}]
\begin{scope}[shift={(0.00mm,719.29mm)}]
\draw [fill=black,draw=black] (659.15mm,-50.45mm) circle (4.80mm) ;
\draw [fill=black,draw=black] (743.15mm,45.55mm) circle (4.80mm) ;
\draw [fill=black,draw=black] (832.15mm,156.55mm) circle (4.80mm) ;
\draw [fill=none,draw=black] (270.72mm,-589.75mm)
-- ++(-164.05mm,166.88mm)
;
\draw [fill=none,draw=black] (502.00mm,-591.64mm)
-- ++(-318.00mm,328.00mm)
;
\draw [fill=none,draw=black] (16.66mm,-581.15mm)
-- (846.26mm,1037.66mm)
;
\draw [fill=none,draw=black] (262.24mm,-87.49mm)
-- ++(497.80mm,-492.15mm)
-- ++(0.00mm,0.00mm)
-- ++(0.00mm,0.00mm)
;
\draw [fill=none,draw=black] (341.44mm,62.41mm)
-- ++(692.96mm,-650.54mm)
-- ++(0.00mm,0.00mm)
;
\draw [fill=none,draw=black] (632.76mm,613.96mm)
-- (1749.99mm,-559.84mm)
;
\node [black] at (12.00mm,-660mm) { $\ydiagram{1}$ };
\node [black] at (272.00mm,-660.64mm) { $\ydiagram{1}$ };
\node [black] at (488.00mm,-660.64mm) { $\ydiagram{1}$ };
\node [black] at (736.00mm,-660.64mm) { $\ydiagram{1}$ };
\node [black] at (1024.00mm,-660.64mm) { $\ydiagram{1}$ };
\node [black] at (1736.00mm,-660.64mm) { $\ydiagram{1}$ };
\node [black] at (848.00mm,1100mm) { $\lambda$ };
\end{scope}
\end{tikzpicture}
\]
where $\lambda$ is a partition with two or more rows. The grading on the Grothendieck ring implies that $I$ is a 2-sided ideal in $T$, so we can form the quotient algebra $S = T/I$. We have
\[
S = \emptyset \oplus \ydiagram{1} \oplus \ydiagram{2} \oplus \ydiagram{3} \oplus \cdots
\]
Define $\Rep(S)$ to be category of right modules over $S$ internal to the category $\mathcal{H}$. Just like the tensor algebra, every projective $S$-module is free and the indecomposable projectives are of the form $\lambda \otimes S$ where $\lambda$ is a partition. Define $F$ to be the fusion graph for $\ydiagram{1}$ inside $\mathcal{H}$ interpreted as a category. Define $M$ to be the category whose objects are partitions and whose morphisms are defined by
\[
M(\lambda,\mu) = \hom_S(\mu \otimes S, \lambda \otimes S).
\]
Then we have the functor $Q = - \otimes_T S : F \to M$. By definition, this functor is the identity on objects. Since all the projectives involved are free, it follows that $Q$ is full. We can describe $Q$ more concretely as follows. Each hom space in $F$ is a skew representation of some Iwahori-Hecke algebra. We have:
\begin{Lemma} \label{lem:Q_is_projection_onto_invariants}
On morphisms, $Q$ projects onto the Hecke algebra invariants.
\end{Lemma}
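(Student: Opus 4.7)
The plan is to identify $Q$, on morphisms, with post-composition by the quantum symmetrizer idempotent, which is precisely the projection onto the trivial Hecke representation.

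First I would use the adjunction from the proof of Lemma \ref{lem:indecomposable_projectives} to rewrite the hom spaces:
\[
F(\lambda,\mu) = \mathcal{H}(\mu,\lambda \otimes X^{\otimes n}), \qquad M(\lambda,\mu) = \mathcal{H}(\mu,\lambda \otimes \ydiagram{n}),
\]
where $X = \ydiagram{1}$ and $n = |\mu| - |\lambda|$; only the degree-$n$ pieces of $T$ and $S$ contribute. Under these identifications, $Q = -\otimes_T S$ acts on morphisms by post-composition with $\mathrm{id}_\lambda \otimes \pi_n$, where $\pi_n : X^{\otimes n} \to \ydiagram{n}$ is the degree-$n$ component of the quotient map $T \to S$.

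Next I would identify $\pi_n$ with the $q$-symmetrizer. Quantum Schur--Weyl duality gives $\End_\mathcal{H}(X^{\otimes n}) \cong H_n$, with isotypic decomposition $X^{\otimes n} = \bigoplus_{\nu \vdash n} \nu \otimes S^\nu$ pairing each Weyl summand $\nu$ with its dual Specht module $S^\nu$. Since $\ydiagram{n}$ appears in $X^{\otimes n}$ with multiplicity one (matching $\dim S^{(n)} = 1$), the split surjection $\pi_n$ and a section $\iota_n : \ydiagram{n} \hookrightarrow X^{\otimes n}$ are determined up to scalar by $\pi_n \iota_n = \mathrm{id}$, and $e_n := \iota_n \pi_n \in \End_\mathcal{H}(X^{\otimes n}) = H_n$ is the central idempotent onto the trivial Hecke summand $S^{(n)}$ (on which each $g_i$ acts as $a$)---the $q$-symmetrizer.

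Finally I would combine these. The space $F(\lambda,\mu)$ carries a natural $H_n$-module structure via $h \cdot f := (\mathrm{id}_\lambda \otimes h) \circ f$, and the injection $\mathrm{id}_\lambda \otimes \iota_n$ identifies $M(\lambda,\mu)$ with the subspace $e_n \cdot F(\lambda,\mu) \subseteq F(\lambda,\mu)$. Under this identification $Q$ becomes left multiplication by $e_n$, which is precisely the projection onto the $S^{(n)}$-isotypic component, i.e.\ onto the Hecke algebra invariants. The only substantive point is the identification $\iota_n \pi_n = e_n$; this is where multiplicity one of $\ydiagram{n}$ in $X^{\otimes n}$ and the Schur--Weyl pairing of $\ydiagram{n}$ with the trivial Hecke representation do the real work. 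Everything else is bookkeeping about coends and the $(L,F)$ adjunction.
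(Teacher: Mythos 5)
Your proposal is correct and follows essentially the same route as the paper: identify $Q$ on morphisms, via the $(L,F)$ adjunction and the grading, with post-composition by $\mathrm{id}_\lambda \otimes \pi_n$, and then recognize $\iota_n\pi_n$ as the $q$-symmetrizer idempotent so that $Q$ becomes projection onto the trivial isotypic component. The paper devotes its proof to the diagrammatic verification of the first step (the key point being that $p : T \to S$ is an algebra homomorphism, hence compatible with the module structure used to induce maps between free modules), and leaves the Schur--Weyl/multiplicity-one identification of $\iota_n\pi_n$ with the symmetrizer implicit, whereas you compress the former and spell out the latter; both halves are present and correct in your argument.
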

 \begin{proof}Recall that given a vector $f \in F(\lambda,\mu)$, post composition by the induced map $\hom_T(\mu \otimes T, \lambda \otimes T)$ is given by
\[
\begin{tikzpicture}[yscale=-1,scale=0.03,baseline={([yshift=-.5ex]current bounding box.center)}]
\begin{scope}[shift={(0.00mm,719.29mm)}]
\draw [fill=none,draw=black] (348.32mm,211.45mm)
-- ++(1.19mm,228.76mm)
-- ++(1016.21mm,-2.02mm)
-- ++(0.00mm,-222.23mm)
-- cycle
;
\draw [fill=none,draw=black] (822.86mm,-716.21mm)
-- ++(5.71mm,928.57mm)
;
\draw [fill=none,draw=black] (1122.86mm,-716.21mm)
-- ++(5.71mm,928.57mm)
;
\draw [fill=none,draw=black] (522.86mm,-716.21mm)
-- ++(5.71mm,928.57mm)
;
\draw [fill=none,draw=black] (835.71mm,438.08mm)
-- ++(0.00mm,614.29mm)
;
\node [black] at (500mm,-800mm) {$\mu$ };
\node [black] at (810.14mm,-800mm) { $X$ };
\node [black] at (1125.56mm,-800mm) { $X$ };
\node [black] at (834.39mm,1200mm) { $\nu$ };
\end{scope}
\end{tikzpicture} \quad \mapsto
\begin{tikzpicture}[yscale=-1,scale=0.03,baseline={([yshift=-.5ex]current bounding box.center)}]
\begin{scope}[shift={(0.00mm,719.29mm)}]
\draw [fill=none,draw=black] (348.32mm,434.22mm)
-- ++(1.19mm,228.76mm)
-- ++(1016.21mm,-2.02mm)
-- ++(0.00mm,-222.23mm)
-- cycle
;
\draw [fill=none,draw=black] (974.28mm,-137.79mm)
-- ++(5.71mm,570.15mm)
;
\draw [fill=none,draw=black] (1122.87mm,-137.92mm)
-- ++(5.71mm,570.27mm)
;
\draw [fill=none,draw=black] (522.83mm,270.95mm)
-- ++(5.76mm,161.41mm)
;
\draw [fill=none,draw=black] (835.71mm,662.36mm)
-- ++(0.00mm,390.00mm)
;
\node [black] at (600mm,367.02mm) { $\mu$ };
\node [black] at (834.39mm,1200mm) { $\nu$ };
\draw [fill=none,draw=black] (522.86mm,175.22mm) ellipse (91.43mm and 95.71mm) ;
\draw [fill=none,draw=black] (476.35mm,92.62mm)
.. controls (448.68mm,61.90mm) and (439.00mm,21.77mm) .. (439.48mm,-20.52mm)
;
\draw [fill=none,draw=black] (562.16mm,88.45mm)
.. controls (589.83mm,57.74mm) and (599.51mm,17.61mm) .. (599.03mm,-24.68mm)
;
\draw [fill=none,draw=black] (614.93mm,174.07mm)
.. controls ++(69.06mm,0.86mm) and ++(0.53mm,63.21mm) .. ++(90.66mm,-120.46mm)
;
\draw [fill=none,draw=black] (431.38mm,174.52mm)
.. controls (306.97mm,175.39mm) and (267.09mm,117.36mm) .. (268.05mm,54.24mm)
;
\draw [fill=none,draw=black] (267.95mm,54.77mm)
-- ++(0.09mm,-774.79mm)
;
\draw [fill=none,draw=black] (705.59mm,54.21mm)
-- ++(0.09mm,-191.78mm)
;
\draw [fill=none,draw=black] (439.41mm,-19.97mm)
-- ++(0.09mm,-117.22mm)
;
\draw [fill=none,draw=black] (598.98mm,-22.46mm)
-- ++(0.09mm,-114.88mm)
;
\draw [fill=none,draw=black] (396.68mm,-295.55mm)
-- ++(0.98mm,158.30mm)
-- ++(832.83mm,-1.40mm)
-- ++(0.00mm,-153.78mm)
-- cycle
;
\node [black] at (520mm,185.44mm) { $f$ };
\node [black] at (799.84mm,-195.05mm) { $m$ };
\draw [fill=none,draw=black] (434.37mm,-295.18mm)
-- ++(0.00mm,-424.26mm)
;
\draw [fill=none,draw=black] (594.37mm,-295.18mm)
-- ++(0.00mm,-424.26mm)
;
\draw [fill=none,draw=black] (714.37mm,-295.18mm)
-- ++(0.00mm,-424.26mm)
;
\draw [fill=none,draw=black] (974.37mm,-295.18mm)
-- ++(0.00mm,-424.26mm)
;
\draw [fill=none,draw=black] (1114.37mm,-295.18mm)
-- ++(0.00mm,-424.26mm)
;
\node [black] at (150mm,-544.78mm) { $\lambda$ };
\end{scope}
\end{tikzpicture}
\]
More precisely, the map $f : \mu \to \lambda \otimes X^{\otimes n}$ induces a map $\mu \otimes T \to \lambda \otimes T$ defined by
\[
g : \mu \otimes T \xrightarrow{f \otimes 1} \lambda \otimes X^{\otimes n} \otimes T \xrightarrow{1 \otimes m} \lambda \otimes T
\]
where $m$ is the multiplication map. The diagram depicts post composing a map $\nu \to \mu \otimes T$ with $g$. By Yoneda's lemma, this determines $g$.  If we tensor along the projection $p : T \to S$ we have
\[
\begin{tikzpicture}[yscale=-1,scale=0.03,baseline={([yshift=-.5ex]current bounding box.center)}]
\begin{scope}[shift={(0.00mm,719.29mm)}]
\draw [fill=none,draw=black] (348.32mm,211.45mm)
-- ++(1.19mm,228.76mm)
-- ++(1016.21mm,-2.02mm)
-- ++(0.00mm,-222.23mm)
-- cycle
;
\draw [fill=none,draw=black] (828.55mm,-20.13mm)
-- ++(0.00mm,232.49mm)
;
\draw [fill=none,draw=black] (1128.59mm,-20.49mm)
-- ++(0.00mm,232.84mm)
;
\draw [fill=none,draw=black] (522.86mm,-716.21mm)
-- ++(5.71mm,928.57mm)
;
\draw [fill=none,draw=black] (835.71mm,438.08mm)
-- ++(0.00mm,614.29mm)
;
\node [black] at (400mm,-658.38mm) { $\mu$ };
\node [black] at (834.39mm,1200mm) { $\nu$ };
\draw [fill=none,draw=black] (726.68mm,-174.48mm)
-- ++(0.59mm,154.90mm)
-- ++(505.81mm,-1.37mm)
-- ++(0.00mm,-150.48mm)
-- cycle
;
\node [black] at (921.43mm,-87.64mm) { $p$ };
\draw [fill=none,draw=black] (960.48mm,-172.13mm)
-- ++(-0.67mm,-544.65mm)
-- ++(0.00mm,0.00mm)
;
\node [black] at (1150mm,-493.35mm) { $\ydiagram{2}$ };
\end{scope}
\end{tikzpicture} \quad \mapsto
\begin{tikzpicture}[yscale=-1,scale=0.03,baseline={([yshift=-.5ex]current bounding box.center)}]
\begin{scope}[shift={(0.00mm,719.29mm)}]
\draw [fill=none,draw=black] (348.32mm,600.22mm)
-- ++(1.19mm,228.76mm)
-- ++(1016.21mm,-2.02mm)
-- ++(0.00mm,-222.23mm)
-- cycle
;
\draw [fill=none,draw=black] (974.28mm,28.21mm)
-- ++(5.71mm,570.15mm)
;
\draw [fill=none,draw=black] (1122.87mm,28.08mm)
-- ++(5.71mm,570.27mm)
;
\draw [fill=none,draw=black] (522.83mm,436.95mm)
-- ++(5.76mm,161.41mm)
;
\draw [fill=none,draw=black] (835.71mm,828.08mm)
-- ++(0.00mm,224.29mm)
;
\node [black] at (600mm,533.02mm) { $\mu$ };
\node [black] at (820mm,1200mm) { $\nu$ };
\draw [fill=none,draw=black] (522.86mm,341.22mm) ellipse (91.43mm and 95.71mm) ;
\draw [fill=none,draw=black] (476.35mm,258.62mm)
.. controls (448.68mm,227.90mm) and (439.00mm,187.77mm) .. (439.48mm,145.48mm)
;
\draw [fill=none,draw=black] (562.16mm,254.45mm)
.. controls ++(27.67mm,-30.72mm) and ++(0.48mm,42.29mm) .. ++(36.87mm,-113.14mm)
;
\draw [fill=none,draw=black] (614.93mm,340.07mm)
.. controls ++(69.06mm,0.86mm) and ++(0.53mm,63.21mm) .. ++(90.66mm,-120.46mm)
;
\draw [fill=none,draw=black] (431.38mm,340.52mm)
.. controls (306.97mm,341.39mm) and (267.09mm,283.36mm) .. (268.05mm,220.24mm)
;
\draw [fill=none,draw=black] (267.95mm,220.77mm)
-- ++(0.09mm,-939.45mm)
;
\draw [fill=none,draw=black] (705.59mm,220.21mm)
-- ++(0.09mm,-191.78mm)
;
\draw [fill=none,draw=black] (439.41mm,146.03mm)
-- ++(0.09mm,-117.22mm)
;
\draw [fill=none,draw=black] (598.98mm,143.54mm)
-- ++(0.09mm,-114.88mm)
;
\draw [fill=none,draw=black] (396.68mm,-129.55mm)
-- ++(0.98mm,158.30mm)
-- ++(832.83mm,-1.40mm)
-- ++(0.00mm,-153.78mm)
-- cycle
;
\node [black] at (509.12mm,351.44mm) { $f$ };
\node [black] at (799.84mm,-29.05mm) { $m$ };
\draw [fill=none,draw=black] (434.37mm,-129.18mm)
-- ++(0.00mm,-144.26mm)
;
\draw [fill=none,draw=black] (594.37mm,-129.18mm)
-- ++(0.00mm,-144.26mm)
;
\draw [fill=none,draw=black] (714.37mm,-129.18mm)
-- ++(0.00mm,-144.26mm)
;
\draw [fill=none,draw=black] (974.37mm,-129.18mm)
-- ++(0.00mm,-144.26mm)
;
\draw [fill=none,draw=black] (1114.37mm,-129.18mm)
-- ++(0.00mm,-144.26mm)
;
\node [black] at (150mm,-378.78mm) { $\lambda$ };
\draw [fill=none,draw=black] (396.68mm,-429.55mm)
-- ++(0.98mm,158.30mm)
-- ++(832.83mm,-1.40mm)
-- ++(0.00mm,-153.78mm)
-- cycle
;
\node [black] at (765.71mm,-339.07mm) { $p$ };
\draw [fill=none,draw=black] (810.00mm,-428.35mm)
-- ++(0.00mm,-290.71mm)
;
\node [black] at (1150mm,-594.19mm) { $\ydiagram{5}$ };
\end{scope}
\end{tikzpicture} =
\begin{tikzpicture}[yscale=-1,scale=0.03,baseline={([yshift=-.5ex]current bounding box.center)}]
\begin{scope}[shift={(0.00mm,719.29mm)}]
\draw [fill=none,draw=black] (348.32mm,600.22mm)
-- ++(1.19mm,228.76mm)
-- ++(1016.21mm,-2.02mm)
-- ++(0.00mm,-222.23mm)
-- cycle
;
\draw [fill=none,draw=black] (974.28mm,28.21mm)
-- ++(5.71mm,570.15mm)
;
\draw [fill=none,draw=black] (1122.87mm,28.08mm)
-- ++(5.71mm,570.27mm)
;
\draw [fill=none,draw=black] (522.83mm,436.95mm)
-- ++(5.76mm,161.41mm)
;
\draw [fill=none,draw=black] (835.71mm,828.08mm)
-- ++(0.00mm,224.29mm)
;
\node [black] at (600mm,533.02mm) { $\mu$ };
\node [black] at (834.39mm,1200mm) { $\nu$ };
\draw [fill=none,draw=black] (522.86mm,341.22mm) ellipse (91.43mm and 95.71mm) ;
\draw [fill=none,draw=black] (476.35mm,258.62mm)
.. controls (448.68mm,227.90mm) and (439.00mm,187.77mm) .. (439.48mm,145.48mm)
;
\draw [fill=none,draw=black] (562.16mm,254.45mm)
.. controls ++(27.67mm,-30.72mm) and ++(0.48mm,42.29mm) .. ++(36.87mm,-113.14mm)
;
\draw [fill=none,draw=black] (614.93mm,340.07mm)
.. controls ++(69.06mm,0.86mm) and ++(0.53mm,63.21mm) .. ++(90.66mm,-120.46mm)
;
\draw [fill=none,draw=black] (431.38mm,340.52mm)
.. controls (306.97mm,341.39mm) and (267.09mm,283.36mm) .. (268.05mm,220.24mm)
;
\draw [fill=none,draw=black] (267.95mm,220.77mm)
-- ++(0.09mm,-939.45mm)
;
\draw [fill=none,draw=black] (705.59mm,220.21mm)
-- ++(0.09mm,-191.78mm)
;
\draw [fill=none,draw=black] (439.41mm,146.03mm)
-- ++(0.09mm,-117.22mm)
;
\draw [fill=none,draw=black] (598.98mm,143.54mm)
-- ++(0.09mm,-114.88mm)
;
\node [black] at (509.12mm,351.44mm) { $f$ };
\draw [fill=none,draw=black] (594.37mm,-129.18mm)
-- ++(0.00mm,-221.41mm)
;
\draw [fill=none,draw=black] (1056.37mm,-129.18mm)
-- ++(0.00mm,-222.84mm)
;
\node [black] at (120mm,-378.78mm) { $\lambda$ };
\draw [fill=none,draw=black] (505.22mm,-509.59mm)
-- ++(0.81mm,158.37mm)
-- ++(687.35mm,-1.40mm)
-- ++(0.00mm,-153.85mm)
-- cycle
;
\node [black] at (1150mm,-650mm) { $\ydiagram{5}$ };
\draw [fill=none,draw=black] (396.56mm,-129.67mm)
-- ++(0.46mm,158.54mm)
-- ++(389.12mm,-1.40mm)
-- ++(0.00mm,-154.01mm)
-- cycle
;
\draw [fill=none,draw=black] (896.54mm,-129.69mm)
-- ++(0.38mm,158.59mm)
-- ++(323.53mm,-1.40mm)
-- ++(0.00mm,-154.06mm)
-- cycle
;
\draw [fill=none,draw=black] (822.26mm,-720.46mm)
-- ++(2.02mm,213.14mm)
-- ++(0.00mm,0.00mm)
;
\node [black] at (565.69mm,-26.48mm) { $p$ };
\node [black] at (1042.48mm,-30.52mm) { $p$ };
\node [black] at (850mm,-420mm) { $m$ };
\node [black] at (780mm,-224.47mm) { $\ydiagram{3}$ };
\node [black] at (1200mm,-232.55mm) {$\ydiagram{2}$ };
\end{scope}
\end{tikzpicture}
\]
The second equality is true because $p$ is an algebra homomorphism, so it commutes with multiplication.
\end{proof}

\begin{Proposition} \label{prop:skew_rep_invariants}
Suppose that $\lambda \subseteq \mu$ are partitions. Then  $\mathcal{H}(\mu,\lambda \otimes \ydiagram{1}^{\otimes n})$ has Hecke algebra invariants if and only if $\mu \backslash \lambda$ is a horizontal strip. In this case, the invariants are 1-dimensional and any skew tableaux projects onto a nonzero invariant.
\end{Proposition}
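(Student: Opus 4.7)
The plan is to get the iff via the quantum Pieri rule and the projection statement via the explicit two-dimensional Ram--Leduc matrix.

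First, I would use quantum Schur--Weyl to identify the $H_n$-invariants as another hom space in $\mathcal{H}$. The trivial $H_n$-isotypic component of $\ydiagram{1}^{\otimes n}$ is precisely $\ydiagram{n} \subseteq \ydiagram{1}^{\otimes n}$ (cut out by the quantum symmetrizer, which is well defined at generic $a$), so
\[
\mathcal{H}(\mu,\lambda \otimes \ydiagram{1}^{\otimes n})^{H_n} \;=\; \mathcal{H}(\mu,\lambda \otimes \ydiagram{n}),
\]
and the Pieri rule recorded in Definition \ref{ex:main_example} expresses $\lambda \otimes \ydiagram{n}$ as a multiplicity-free sum of those $\mu$ with $\mu \backslash \lambda$ a horizontal strip of size $n$. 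This settles the iff and forces the invariants to be one-dimensional whenever they are nonzero.

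For the projection statement, write $v_0 = \sum_T c_T T$ for the unique (up to scalar) invariant in the semi-normal basis indexed by standard skew tableaux of shape $\mu \backslash \lambda$. Since that basis is orthonormal for the Hecke-invariant inner product, projecting a basis vector $T$ onto the invariant line yields a scalar multiple of $v_0$ whose coefficient is proportional to $c_T$; it therefore suffices to show $c_T \neq 0$ for every $T$. Given $T$ and $T' = s_i T$: if the labels $i, i+1$ lie in different rows of $T$, the horizontal strip hypothesis automatically places them in different columns, so we are in the two-dimensional Ram--Leduc case, and $T'$ is again a valid standard tableau. Solving $(m(g_i) - aI)x = 0$ for the matrix of Definition \ref{ex:main_example} shows that the $a$-eigenvector of $g_i$ in $\mathrm{span}(T,T')$ has entries in ratio $[d+1] : [d]$, both nonzero at generic $a$. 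Consequently $c_T = 0 \iff c_{T'} = 0$. To close the argument I would observe that the graph $\Gamma$ on standard skew tableaux of horizontal strip shape, with an edge $T \sim s_i T$ whenever the swap is standard, is connected: standard tableaux of a horizontal strip correspond bijectively to ordered set partitions of $\{1,\dots,n\}$ into blocks of the prescribed row sizes, and $S_n$ acts transitively on this set via its adjacent transpositions, each of which is either an edge of $\Gamma$ or stabilizes the current tableau (when the swapped labels already share a row).

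The principal obstacle, to my eye, is the clean implementation of the first step: one must check that the idempotent $e_{(n)} \in H_n$ projecting onto the trivial $H_n$-module genuinely cuts out the sub-object $\ydiagram{n} \subseteq \ydiagram{1}^{\otimes n}$ inside the idempotent completion $\mathcal{H}$, and that taking $\mathcal{H}(\mu,\lambda \otimes -)$ commutes with this $H_n$-isotypic projection. At generic $a$, where $H_n$ is semisimple, this is routine, but it should be spelled out rather than invoked as folklore.
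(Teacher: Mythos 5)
Your argument is correct, and its first half coincides with the paper's: identify the $H_n$-invariants of $\mathcal{H}(\mu,\lambda\otimes\ydiagram{1}^{\otimes n})$ with $\mathcal{H}(\mu,\lambda\otimes(n))$ via the quantum symmetrizer (unproblematic at generic $a$, where $H_n$ is semisimple) and read everything off the multiplicity-free Pieri rule. Where you diverge is the nonvanishing of the projection. The paper's route is shorter: it asserts, from the semi-normal form, that any standard tableau vector $P$ is a cyclic vector for the skew representation, and then the isotypic projection of a cyclic vector onto a nonzero isotypic component cannot vanish (otherwise $P$ would lie in the proper submodule formed by the remaining isotypic pieces) --- no inner product needed. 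You instead compute the support of the invariant vector $v_0=\sum_T c_T T$ directly and transfer back to the projection via an invariant form. These are close cousins: the paper's unproved cyclicity claim is secretly the same pair of ingredients you make explicit, namely connectivity of the graph $T\sim s_iT$ on standard tableaux of a horizontal strip (your ordered-set-partition Cayley-graph argument is valid) together with nonvanishing of the off-diagonal entries of $m(g_i)$; note $[d-1][d+1]/[d]^2\neq 0$ exactly because $d\geq 2$ in the genuinely two-dimensional case, the same reason both entries of your ratio $[d+1]:[d]$ are nonzero (and your linear algebra checks out: $m(g_i)$ has trace $a-a^{-1}$ and determinant $-1$ by $[d-1][d+1]=[d]^2-1$, so eigenvalues $a$ and $-a^{-1}$). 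The one soft spot in your write-up is the word ``orthonormal'': the semi-normal basis is merely orthogonal, with nonconstant norms, for the relevant $H_n$-invariant bilinear form, and you should either state it that way (orthogonality with nonzero norms at generic $a$ is all you actually use to get $\pi(T)$ proportional to $c_T v_0$) or sidestep the form entirely by running the cyclicity argument, which your connectivity lemma already establishes. So your version buys explicitness --- it exposes exactly which Ram--Leduc data and which combinatorics the paper's one-line appeal to the semi-normal form is hiding --- at the cost of one extra standard input about the invariant form.
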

\begin{proof}
The invariants in $\mathcal{H}(\mu,\lambda \otimes \ydiagram{1}^{\otimes n})$ are the same as maps
\[
\mu \to \lambda \otimes \underbrace{\ydiagram{5}}_{n}
\]
By Pieri's rule, $\mathcal{H}(\mu,\lambda \otimes \ydiagram{1}^{\otimes n})$ has invariants if and only if $\mu \backslash \lambda$ is a horizontal strip. Suppose that $\mu \backslash \lambda$ is a horizontal strip and $P$ is a skew tableaux of shape $\mu \backslash \lambda$. Then from the semi-normal form, we know that $P$ generates $\mathcal{H}(\mu,\lambda \otimes \ydiagram{1}^{\otimes n})$. This implies that $\mathcal{H}(\mu,\lambda \otimes \ydiagram{1}^{\otimes n})$ has a $1$-dimensional space of invariants and $P$ projects onto a nonzero invariant.
\end{proof}

\noindent {\em Proof of Theorem \ref{thm:minimal_model}.} The minimal model for $S$ is $M$. From Lemma \ref{lem:Q_is_projection_onto_invariants}, $Q : F \to M$ is a full functor  which is projection onto the Hecke algebra invariants. From Proposition \ref{prop:skew_rep_invariants}, we have
$$M(\mu,\lambda) = F(\mu,\lambda)^{H_n} = \mathcal{H}(\mu,\lambda \otimes \ydiagram{1})^{H_n} = {\bf HS}(\mu,\lambda).$$
\qed

\bibliographystyle{alpha}
\bibliography{bibliography}

\end{document}